\newtheorem{theorem}{Theorem}[section]
\newtheorem{proposition}[theorem]{Proposition}
\newtheorem{lemma}[theorem]{Lemma}
\theoremstyle{definition}
\newtheorem{remark}[theorem]{Remark}
\newcommand{\sys}{{\rm sys}}
\newcommand{\ii}{{\mathit i}}
\newcommand{\R}{{\mathbb R}}
\newcommand{\Z}{{\mathbb Z}}
\newcommand{\M}{{\mathcal M}}
\newcommand{\PSL}{{\rm PSL}}
\newcommand{\arcsinh}{{\,\rm arcsinh}}
\newcommand{\arccosh}{{\,\rm arccosh}}
\long\def\forget#1\forgotten{}
\numberwithin{equation}{section}
\begin{document}

\forget
Nothing appears in the document !
\forgotten

%

%% -------------------------------------------------------------------------------

%% Title

%% --------------------------------------------------------------------------------

%

%\title[Minimal homology basis and pants decompositions]{Minimal homology basis and pants decompositions of surfaces}
\title{The homology systole of hyperbolic Riemann surfaces}

%% -------------------------------------------------------------------------------

%% Authors

%% --------------------------------------------------------------------------------

%

\author[H.~Parlier]{Hugo Parlier}
\address[Hugo Parlier]{Department of Mathematics, University of Fribourg\\
  Switzerland}
\email{hugo.parlier@gmail.com}
\thanks{Research supported by Swiss National Science Foundation grant number PP00P2\textunderscore 128557}

\date{\today}

%%%%%%%%%%%%%%%%%%%%%%%%%%%%%%%

%%    Abstract
%%%%%%%%%%%%%%%%%%%%%%%%%%%%%%%

\begin{abstract} 
The main goal of this note is to show that the study of closed hyperbolic surfaces with maximum length systole is in fact the study of surfaces with maximum length homological systole. The same result is shown to be true for once-punctured surfaces, and is shown to fail for surfaces with a large number of cusps. 
\end{abstract}

%%%%%%%%%%%%%%%%%%%%%%%%%%%%%%%

%%     AMS Mathematics Subject Classification

%%%%%%%%%%%%%%%%%%%%%%%%%%%%%%%

%

\subjclass[2010]{Primary: 30F10. Secondary: 32G15, 53C22.}

\keywords{Riemann surfaces, systole, homological systole}

\maketitle

%\tableofcontents

\section{Introduction}

There is a natural function on moduli space, called the systole function, which associates to a hyperbolic surface the length of its shortest non-trivial closed curve. Unless the hyperbolic surface is topologically a pair of pants, the curve in question is a simple closed geodesic generally called the systole or the systolic loop. Although there exist surfaces with arbitrarily small systole, the systole function is bounded over any given moduli space (of complete finite area surfaces of a given signature). Furthermore via Mumford's compactness theorem, there is (at least) a surface in each moduli space which realizes the maximum length systole. The study of these surfaces, and more generally the study of critical points of the systole function, largely initiated by Schmutz Schaller \cite{sc931}, has generated quite a bit of interest and can be thought of as a type of hyperbolic sphere packing problem. Notable results include a sharp upper bound on the systole among all surfaces of genus $2$ \cite{je84}, the fact that principal congruence subgroups of $\PSL_2(\Z)$ give rise to global maxima in their respective signatures \cite{ad98, sc941} and Akrout's theorem that the systole function is in fact a topological Morse function \cite{ak03} (partial results previously due to Schmutz Schaller \cite{sc99}).

If we define $\sys_{g}$  (or more generally $\sys_{g,n}$) to be the maximum length of a systole among all hyperbolic surfaces of genus $g$ (resp. of genus $g$ with $n$ cusps), it is also an interesting question to ask how these constants grow as a function of topology. As it turns out, the more interesting question is how these constants grow as a function of genus, because for large enough $n$, they cease to grow \cite{ad98,sc941}. Note that the systole length of a closed hyperbolic surface is exactly twice the length of the minimum injectivity radius. Thus if a surface has a systole of length $\ell$, then around any point of the surface, there is an embedded open disk of radius $\ell\over 2$. By considering the area of a disk around a point in the hyperbolic plane, which grows roughly exponentially in radius, one immediately sees that the systole function is bounded by roughly $\log(g)$. In the more general case of closed Riemannian surfaces with area normalized to $4\pi(g-1)$, the result is also true and this is a theorem of Gromov \cite{gr83}. Conversely, there are constructions of families of surfaces, one in each genus, where the systoles grow roughly like $\log(g)$. The first of these constructions was due to Buser and Sarnak \cite{busa94}, and there have been others since \cite{br99,kascvi07}. Both Buser-Sarnak, and Gromov  in the more general setting of Riemannian metrics, also considered the homological systole $\sys^h(S)$, i.e., the shortest homologically non-trivial curve on a surface $S$ of genus $g$. Of course one has the obvious inequality $\sys(S)\leq \sys^h(S)$ for any surface $S$ and it is easy to construct surfaces where the inequality is an equality, resp. where the inequality is strict. Notice that the embedded disk argument above does not necessarily give a homologically non-trivial curve. Nonetheless, as in the case of the homotopy systole, it is not too difficult to find a rough $\log(g)$ upper bound on the homology systole of a hyperbolic surface of genus $g$ \cite{busa94}. Again, the rough $\log(g)$ bound remains true in the setting of Riemannian metrics of normalized area \cite[2.C]{gr96}. 

The goal of this note is to observe that in any signature, there are surfaces which realize the supremum of the homological systole function $\sys_{g,n}^h$ ( $\sys^h_{g,0}=\sys^h_g$ in the closed case) and that in the closed and once-punctured cases, these are the same surfaces that realize the maximum homotopy systole.  Specifically, in section \ref{sec:proof}, the following is shown.

\begin{theorem}\label{thm:main} If $S$ is maximal for $\sys$ among all closed genus $g$ (resp. genus $g$ with one cusp) hyperbolic surfaces, then it is maximal for $\sys^h$. Thus $\sys_g= \sys^h_g$ and $\sys_{g,1}= \sys^h_{g,1}$.
\end{theorem}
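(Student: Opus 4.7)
The plan is to prove the equivalent inequality $\sys^h_g \leq \sys_g$ (and $\sys^h_{g,1} \leq \sys_{g,1}$ in the once-punctured case), as the reverse inequality is immediate from $\sys(S) \leq \sys^h(S)$. Once the two suprema coincide, any $\sys$-maximizer $S$ satisfies $\sys^h_g \geq \sys^h(S) \geq \sys(S) = \sys_g = \sys^h_g$, forcing equality throughout and making $S$ also a $\sys^h$-maximizer. Hence the task reduces to producing, for every closed genus-$g$ hyperbolic surface $S$, another closed genus-$g$ surface $S'$ with $\sys(S') \geq \sys^h(S)$.

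If the systole of $S$ is already homologically non-trivial, take $S' = S$. Otherwise the systole $\gamma$ separates $S = S_1 \cup_\gamma S_2$ with $S_i$ of genus $g_i \geq 1$; set $\ell := \length(\gamma)$ and $L := \sys^h(S)$, so that $\ell < L$. The strategy I would try is to deform $S$ by increasing the Fenchel--Nielsen length $\ell_\gamma$ of $\gamma$, while keeping the twist along $\gamma$ and all other FN coordinates fixed, producing a one-parameter family $S(t)$ with $\ell_\gamma(S(t)) = \ell + t$. If one can show that every non-separating simple closed geodesic retains length at least $L$ for $t \in [0, L-\ell]$, then $\sys(S(L-\ell)) \geq L$ and $S' := S(L-\ell)$ does the job.

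The main obstacle is controlling the homological systole along this deformation. For non-separating curves contained entirely in one $S_i$, lengths vary smoothly with $\ell_\gamma$; for non-separating curves crossing $\gamma$, the collar lemma gives a lower bound in terms of $\ell_\gamma$ and the intersection number with $\gamma$. A cleaner route is to pass through the pinching limit: as $\ell_\gamma \to 0$, $S$ degenerates to a wedge of two once-punctured surfaces $S_i^0$ of genera $g_i < g$. Applying the once-punctured case of the theorem (presumably proved in parallel, perhaps by induction on genus) to each $S_i^0$ provides hyperbolic surfaces of genera $g_i$ with geodesic boundary of a prescribed length whose interior non-separating geodesics are all of length at least $L$. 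Regluing these along boundaries of length $L$ then yields a closed genus-$g$ surface whose systole is at least $L$, provided no new short non-separating curves appear.

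The hard part is exactly this last proviso: ensuring that reopening the pinched cusps into geodesic boundaries of length $\geq L$ and gluing them does not introduce a short non-separating curve crossing the new $\gamma$. This likely requires a careful choice of twist parameter at $\gamma$ together with a continuity argument for length functions near the degenerate surface $S_1^0 \vee S_2^0$. I expect the once-punctured assertion to play a dual role in the paper: as the base of an induction on genus, and as the structural tool that makes the reopening construction possible.
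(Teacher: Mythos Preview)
Your proposal is incomplete: the gap you yourself flag as ``the hard part'' is real, and neither of the two routes you sketch resolves it.

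In the deformation route, increasing the Fenchel--Nielsen length of the separating systole $\gamma$ (even via the length expansion lemma rather than raw FN coordinates) only controls simple closed geodesics \emph{disjoint} from $\gamma$. Non-separating curves that cross $\gamma$ may shrink along such a deformation, and the collar estimate runs the wrong way: as $\ell_\gamma$ grows the collar narrows, so the lower bound it gives on crossing curves \emph{decreases}. Nothing prevents some non-separating curve through $\gamma$ from dropping below $L$ before $\ell_\gamma$ reaches $L$. The pinching/regluing route has the same defect in different clothing: after regluing two bordered pieces along a boundary of length $L$, you have no control over non-separating curves that genuinely run through both halves, and the once-punctured statement concerns cusped surfaces, not bordered ones, so it does not directly feed an induction of the shape you describe.

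The paper sidesteps this entirely by changing the starting point: instead of deforming an arbitrary $S$, one first shows that $\sys^h$ attains a maximum at some $S_{\max}$, and then works there, where only an \emph{infinitesimal} deformation is needed. If $S_{\max}$ had a homologically trivial (hence separating) systole $\delta$, then $\ii(\delta,\gamma)$ is even for every homological systole $\gamma$; on the other hand a cut-and-paste argument (using that on closed or once-punctured surfaces the systole is straight, i.e.\ distance-realizing between any two of its points) forces $\ii(\delta,\gamma)\le 1$. Hence $\delta$ is disjoint from every homological systole, and the length expansion lemma applied to $\delta$ yields a nearby surface with strictly larger $\sys^h$, contradicting maximality. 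Thus every systole of $S_{\max}$ is non-separating, so $\sys(S_{\max})=\sys^h(S_{\max})=\sys^h_{g,n}$ and $\sys_{g,n}\ge\sys^h_{g,n}$. No induction on genus is used; the closed and once-punctured cases are parallel, not nested.
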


However, it is clear that one cannot hope to generalize the above result to arbitrary signature. Indeed, via Buser's hairy torus examples \cite{buhab, bubook}, one can construct a family of surfaces of genus $1$ with $n$ cusps with homology systoles of length roughly $\sqrt{n}$. In contrast, the homotopy systole of a surface of genus $1$, no matter how many cusps it has, is uniformly bounded. In section \ref{sec:mul}, this example is adapted to arbitrary genus to show $\sys^h_{g,n}  > \sys_{g,n}$ for $n\geq 25 g$.

\section{Proof of Theorem \ref{thm:main}}\label{sec:proof}

We'll begin by showing that like the usual systole, the homological systole admits a maximum over the moduli space of genus $g$ hyperbolic surfaces with $n$ cusps. 
For the systole function, this is an immediate consequence of the continuity of the systole function and Mumford's compactness theorem \cite{mu71} which states the set of surfaces with injectivity radius bounded below is a compact subset of moduli space. Here we need to be more careful because a priori surfaces could be arbitrarily close to the supremum of the homological systole and have a an arbitrarily small systole (which in this case would be homologically trivial). The following lemma will allow us to again apply Mumford's compactness theorem. Before stating the lemma, we set $\M_{g,n}^{\varepsilon}$ to be the set of all surfaces of genus $g$ with systole bounded below by $\varepsilon$ and $\left(\M_{g,n}^{\varepsilon}\right)^c$ to be the set of all surfaces with systole strictly less than $\varepsilon$.

\begin{lemma}\label{lem:max} For each signature $(g,n)$, there is an $\varepsilon_{g,n}>0$ such that 
$$
\sup_{S\in \left(\M_{g,n}^{\varepsilon_{g,n}}\right)^c} \sys^h(S) < \sup_{S\in \M_{g,n}^{\varepsilon_{g,n}}} \sys^h(S).
$$
\end{lemma}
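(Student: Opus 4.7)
The plan is to argue by contradiction. First, the supremum $L:=\sup_{\M_{g,n}}\sys^h$ is finite: any Bers pants decomposition has all curves of length at most a constant depending only on $(g,n)$, and (for $\chi<0$) at least one of these curves is homologically non-trivial --- for $g\ge 1$ because every pants decomposition of a positive-genus surface must contain a non-separating curve (otherwise it would be a tree of pants, hence planar), and for $g=0$ because every essential simple closed curve on a punctured sphere represents a non-trivial class in $H_1$. Since $\sup_{\M_{g,n}^\varepsilon}\sys^h\to L$ as $\varepsilon\downarrow 0$, the assumed failure of the lemma yields a sequence $S_k\in\M_{g,n}$ with $\sys(S_k)\to 0$ and $\sys^h(S_k)\to L>0$.

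If the systole $\gamma_k$ of $S_k$ is non-separating then $[\gamma_k]\ne 0$ and $\sys^h(S_k)\le\length(\gamma_k)\to 0$, contradicting $L>0$. In the planar case $g=0$ every essential simple closed curve is homologically non-trivial, so $\sys^h=\sys$ and the conclusion follows from Mumford's theorem directly. So $g\ge 1$ and, passing to a subsequence, $\gamma_k$ is separating and splits $S_k$ into pieces $T_k^1,T_k^2$ of fixed topological type $(g_i,n_i,1)$ with $g_1+g_2=g$ and $n_1+n_2=n$; WLOG $g_1\ge 1$. The geometric crux is to produce in $T_k^1$ a short homologically non-trivial simple closed geodesic. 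Applying Bers to $T_k^1$, viewed as a hyperbolic surface with one (short) geodesic boundary, gives a pants decomposition of $T_k^1$ with all curves of length at most a constant $B$ depending only on its signature; since $g_1\ge 1$, this decomposition contains a non-separating curve $\alpha_k$. A Mayer--Vietoris computation --- splitting $H_1(T_k^1)=\text{handle}\oplus\text{peripheral}$, with $[\gamma_k]$ lying in the peripheral summand while $[\alpha_k]$ has a non-zero handle component --- shows that $[\alpha_k]$ remains non-trivial in $H_1(S_k)$ regardless of whether $n_2=0$ or $n_2\ge 1$, yielding $\sys^h(S_k)\le B$.

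The main obstacle is to turn the bound $L\le B$ into a genuine contradiction. My plan is induction on the complexity $3g-3+n$, with base cases reducing to $\sys^h=\sys$ where Mumford suffices. In the inductive step, the IH applied to $(g_1,n_1+1)$ (obtained from $T_k^1$ by pinching $\gamma_k$ to a cusp) ensures that the sup of $\sys^h$ on $\M_{g_1,n_1+1}$ is attained on a compact subset; a limiting argument as $\length(\gamma_k)\to 0$ refines the crude Bers estimate into $\limsup_k\sys^{\rm ns}(T_k^1)\le L^{\rm ns}_{g_1,n_1+1}$, the supremum over $\M_{g_1,n_1+1}$ of the length of the shortest non-separating simple closed geodesic. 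The most delicate remaining input is a strict monotonicity statement $L_{g,n}>L^{\rm ns}_{g_1,n_1+1}$ over all admissible partitions, which I would establish constructively --- for instance by grafting an extra handle, or a long thin tube, onto a maximizer for the smaller signature, producing a surface in some $\M_{g,n}^{\varepsilon}$ whose homological systole strictly exceeds $L^{\rm ns}_{g_1,n_1+1}$ and thereby closing the induction.
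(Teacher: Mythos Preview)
Your approach has a genuine gap at precisely the point you flag as ``most delicate'': the strict monotonicity $L_{g,n}>L^{\rm ns}_{g_1,n_1+1}$. The grafting sketch does not establish it. If you open the distinguished cusp of a maximizer $X\in\M_{g_1,n_1+1}$ to a short boundary geodesic and glue on a piece of type $(g_2,n_2,1)$, the lengths of curves in the $X$--side only \emph{converge} to their values on $X$ as the neck shrinks; you obtain $\sys^h$ of the glued surface tending to $\sys^h(X)$, which yields $\ge$ rather than $>$. Nothing in the construction forces a strict gain, and there is no obvious reason the inequality could not be an equality for some splitting. A second, compounding difficulty is your limiting step $\limsup_k \sys^{\rm ns}(T_k^1)\le L^{\rm ns}_{g_1,n_1+1}$: the pieces $T_k^1$ need not stay in a thick part of $\M_{g_1,n_1+1}$, so you would have to iterate the pinching and track a tree of degenerations, making the induction considerably more intricate than your outline suggests. (Minor point: when $n\ge 1$ a separating curve can still be homologically non-trivial, so the dichotomy should be ``$[\gamma_k]=0$ vs.\ $[\gamma_k]\ne 0$'' rather than ``separating vs.\ non-separating''.)

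The paper sidesteps all of this with a deformation argument in the \emph{same} moduli space rather than a comparison across strata. The key input is the Length Expansion Lemma: given disjoint simple closed geodesics, one can strictly increase their lengths while strictly increasing the length of every simple closed geodesic disjoint from them. Combined with the collar lemma (if the homologically trivial systole is short enough, every homological systole is disjoint from it), this lets one push any surface with $\sys<\varepsilon_{g,n}$ to a surface with $\sys\ge\varepsilon_{g,n}$ and \emph{strictly larger} $\sys^h$; hence the supremum of $\sys^h$ over the thin part is dominated by its supremum over the thick part. This is both shorter and avoids any cross-signature monotonicity statement.
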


Before giving the proof of the lemma, we recall the following lemma (see for instance \cite{path101,pa051,thspine}).

\begin{lemma}\label{lem:lel}[Length expansion lemma] Let $S$ be a surface with $N > 0$ disjoint simple closed geodesics $\gamma_1,\hdots,\gamma_N$ of lengths $\ell_1,\hdots,\ell_N$. For
$(\delta_1,\hdots,\delta_N) \in (\R^+)^N$ with at least
one $\delta_k\neq 0$, there exists a surface $\tilde{S}$ with $\ell_{\tilde{S}}(\gamma_1)=\ell_1+\delta_1,\hdots,\ell_{\tilde{S}}(\gamma_N)=\ell_N+\delta_N$ and all simple closed geodesics of $\tilde{S}$ disjoint from $\gamma_1,\hdots,\gamma_N$ of 
length strictly greater than their length on $S$.
\end{lemma}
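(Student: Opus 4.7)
The plan is to construct $\tilde S$ by moving along a path in Teichm\"uller space, working in Fenchel--Nielsen coordinates attached to an extension of $\Gamma = \gamma_1 \cup \cdots \cup \gamma_N$ to a pants decomposition. First I would extend $\{\gamma_1,\ldots,\gamma_N\}$ to a pants decomposition $\PP = \{\gamma_1,\ldots,\gamma_N,\eta_1,\ldots,\eta_m\}$ of $S$, producing coordinates $(\ell_i,\ell'_j;\tau_i,\tau'_j)$ where $\ell'_j = \ell_S(\eta_j)$. Cutting $S$ along $\Gamma$ gives subsurfaces $\Sigma_1,\ldots,\Sigma_p$, and any simple closed geodesic $\alpha$ on $S$ disjoint from $\Gamma$ sits inside some $\Sigma_k$; moreover, if $\alpha$ is not itself one of the $\eta_j$, then, being essential in $\Sigma_k$ and not isotopic to a boundary component, it must intersect some $\eta_j$ transversally (since in a pair of pants the only simple closed geodesics are the boundary components).

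The candidate deformation is the one-parameter family $S_t$, $t\in[0,1]$, defined by
\[
\ell_{S_t}(\gamma_i) = \ell_i + t\delta_i, \qquad \ell_{S_t}(\eta_j) = \ell'_j + t\epsilon_j,
\]
with all twist parameters held constant, and with auxiliary constants $\epsilon_j > 0$ to be chosen. Setting $\tilde S := S_1$ produces the prescribed values $\ell_{\tilde S}(\gamma_i) = \ell_i + \delta_i$ by construction, and $\ell_{\tilde S}(\eta_j) = \ell'_j + \epsilon_j > \ell'_j$ handles the case $\alpha = \eta_j$ immediately. The nontrivial issue is to choose the $\epsilon_j$ so that $t \mapsto \ell_{S_t}(\alpha)$ is strictly increasing on $[0,1]$ for every other $\alpha$ disjoint from $\Gamma$; I emphasize that this is not automatic, since a direct computation with the standard hyperbolic pair-of-pants formulas shows that increasing a single boundary length of a pair of pants (say $a$ in $P(a,b,c)$) \emph{decreases} the common perpendiculars $d_{ab}$ and $d_{ac}$. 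Thus the $\delta_i$-terms alone can shorten certain simple closed geodesics, and the purpose of the positive $\epsilon_j$'s is to overcompensate.

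To verify monotonicity, I would expand $\tfrac{d}{dt}\ell_{S_t}(\alpha)$ using classical length-derivative formulas on Teichm\"uller space, decomposing the derivative into contributions from each pants curve of $\PP$. The $\epsilon_j$-contributions are bounded below by a positive multiple of $i(\alpha,\eta_j)$ and the cosines of the intersection angles, while the $\delta_i$-contributions are bounded in absolute value uniformly in $\alpha$ by a constant depending only on the $\delta_i$ and the geometry of the pants adjacent to $\gamma_i$. Since any $\alpha$ in question has $i(\alpha,\eta_j)\geq 1$ for some $j$, choosing each $\epsilon_j$ large enough forces the positive terms to dominate for that specific $\alpha$.

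The principal obstacle, and the step I expect to require the most care, is passing from this per-$\alpha$ choice to a single choice of $\epsilon_j$'s that works simultaneously for the infinite family of relevant $\alpha$. My plan for this is a compactness/collar-lemma argument: on each subsurface $\Sigma_k$ with its induced pants decomposition, the collar lemma bounds the intersection angles of any simple closed geodesic with a pants curve away from $\pi/2$ in terms of the lengths $\ell_i,\ell'_j$, and consequently the ratio of the negative to positive contributions at time $t=0$ is uniformly bounded over all $\alpha$. Taking the $\epsilon_j$ larger than this uniform bound gives strict monotonicity of $\ell_{S_t}(\alpha)$ on $[0,1]$ for every simple closed geodesic $\alpha$ disjoint from $\Gamma$, and then $\tilde S := S_1$ satisfies all conclusions of the lemma.
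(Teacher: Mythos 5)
Your deformation has the wrong sign at its core. The paper itself does not prove this lemma but quotes it from \cite{path101,pa051,thspine}, where the mechanism is Thurston's strip deformation: cut along $\gamma_1,\hdots,\gamma_N$, choose a system of embedded arcs between boundary components that meets every essential simple closed curve, and glue in thin hyperbolic strips along these arcs; this visibly lengthens every geodesic crossing an arc, shortens nothing, and increases the boundary lengths by tunable amounts. Your substitute mechanism --- increasing the auxiliary cuff lengths $\ell(\eta_j)$ by large $\epsilon_j$ with twists fixed --- does not have the positivity you attribute to it. You correctly note that increasing a boundary length of a pair of pants shrinks the seams, but you then claim the $\epsilon_j$-contribution to $\frac{d}{dt}\ell_{S_t}(\alpha)$ is bounded below by a positive multiple of $i(\alpha,\eta_j)$. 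The opposite happens: increasing $\ell(\eta_j)$ thins out the two pairs of pants adjacent to $\eta_j$, so it tends to \emph{shorten} the curves crossing $\eta_j$. Concretely, take a one-holed torus piece built from $P(\ell,\ell,L)$ with $L$ fixed, interior pants curve $\eta$ of length $\ell$, and twist normalized to $0$ so that the common perpendicular between the two glued cuffs closes up to the dual geodesic $\alpha$ with $i(\alpha,\eta)=1$. Then $\ell(\alpha)=d$ with $\cosh d=\bigl(\cosh(L/2)+\cosh^2(\ell/2)\bigr)/\sinh^2(\ell/2)\to 1$ as $\ell\to\infty$, so $\ell(\alpha)$ is strictly decreasing in $\ell(\eta)$ along your flow: making $\epsilon_j$ larger makes matters worse, not better.

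Even setting the sign aside, two further steps are unsupported. There is no analogue of Wolpert's twist-derivative formula expressing $\partial\ell_\alpha/\partial\ell_{\eta_j}$ as a sum over intersection points of terms with controlled sign, so the ``classical length-derivative formulas'' you invoke do not give the decomposition you need. And the uniformity over all $\alpha$ cannot come from the collar lemma as you suggest: the collar lemma bounds neither the intersection angles away from $\pi/2$ (images of a fixed curve under high powers of a Dehn twist about $\eta_j$ meet $\eta_j$ at angles tending to $0$) nor the number of intersections of $\alpha$ with the pants decomposition, so the ``negative contributions'' are not uniformly bounded in $\alpha$, and in any case the estimates would have to hold along the whole path $t\in[0,1]$, not just at $t=0$. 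I would abandon the Fenchel--Nielsen length flow and argue via cutting along the $\gamma_i$ and inserting strips along a filling arc system, as in \cite{thspine,path101}.
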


\begin{proof}[Proof of lemma \ref{lem:max}]
Set 
$$
\varepsilon'_{g,n}= \frac{1}{2} \sup_{S\in \M_{g,n}} \sys^h(S)= \frac{1}{2}\sys^h_{g,n}
$$
and consider a surfaces in $\left(\M_{g,n}^{\varepsilon'_{g,n}}\right)^c$. Clearly, the supremum of $\sys^h$ among all such surfaces with homological systole less than $\varepsilon_{g,n}'$ will be strictly less than $\sys^h_{g,n}$. We only need to worry about surfaces with homologically trivial systole in $\left(\M_{g,n}^{\varepsilon'_{g,n}}\right)^c$. Let $S$ be such a surface.

The collar lemma \cite{kee74} ensures that if a simple closed geodesic $\gamma\subset S$ is sufficiently short, then any simple closed geodesic that intersects it transversally is long (where long depends only on how short the geodesic is). In particular, as $\sys^h$ is bounded above over $\M_{g,n}$, if $\gamma$ is less than a certain constant $\varepsilon_{g,n}^{\prime \prime}$ (which depends only on the topology) then all of the surface's homological systoles are disjoint from $\gamma$. We can now apply lemma \ref{lem:lel} to increase the (homologically trivial) systoles of $S$ by some small $\delta>0$ which will give us a new surface with $\sys^h(S') > \sys^h(S)$ and $\sys(S')= \sys(S)+\delta$. This process can be repeated until either the systole of the surface is now $\varepsilon'_{g,n}$, or there is a homological systole which crosses each of the homologically trivial systoles. In the latter case, both the homologically trivial and non-trivial systoles are at least of length equal to the constant $\varepsilon_{g,n}^{\prime \prime}$. We set 
$$
\varepsilon_{g,n} = \min \left( \varepsilon_{g,n}', \varepsilon_{g,n}^{\prime \prime}\right)$$
and the lemma is proved. 
\end{proof}

We've shown that $\sup_{S\in \M_{g,n}} (\sys_h(S))= \sup_{S\in \M_{g,n}^{\varepsilon_{g,n}}} (\sys_h(S))$ thus via Mumford's compactness theorem, we can now conclude that there are  surfaces which realize the maximum size homological systole in every signature.

A closed geodesic $\gamma$ is said to be {\it straight} if for any two of its points $p,q$, any distance realizing path is a sub-arc of $\gamma$. Homological systoles {\it always} have this property \cite{gr83}. The following is well known for closed surfaces, and we provide a proof that includes once-punctured surfaces.

\begin{lemma}\label{lem:straight} On closed and once-punctured surfaces, systoles are straight. 
\end{lemma}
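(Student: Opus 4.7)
The plan is to argue by contradiction. Suppose $\gamma$ is a systole of length $\ell$ and there exist distinct points $p,q\in\gamma$ and a distance-realising geodesic arc $\alpha$ from $p$ to $q$ that is not a sub-arc of $\gamma$; I aim to construct a closed curve of length strictly less than $\ell$ which is non-trivial (and, in the once-punctured case, non-peripheral), contradicting the minimality of $\length(\gamma)$. After replacing $\alpha$ by a minimal sub-arc I may assume $\alpha\cap\gamma=\{p,q\}$. Let $\gamma_1,\gamma_2$ denote the two sub-arcs of $\gamma$ joining $p$ to $q$, with lengths $\ell_1\le\ell_2$, so that $\ell_1\le\ell/2$ and $\length(\alpha)\le\ell_1$.

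Form the piecewise-geodesic loops $L_1=\alpha\cdot\gamma_1^{-1}$ and $L_2=\alpha\cdot\gamma_2$ based at $p$, with lengths bounded above by $2\ell_1\le\ell$ and $\ell_1+\ell_2=\ell$ respectively. Because geodesic arcs in a hyperbolic surface are the unique representatives of their homotopy classes rel endpoints, the hypothesis that $\alpha$ differs from $\gamma_1$ and from $\gamma_2^{-1}$ as arcs implies both $L_i$ are non-trivial in $\pi_1(S,p)$ and exhibit a non-zero angle at each of $p$ and $q$ (else tangency would force $\alpha$ to coincide with a sub-arc of $\gamma$). In the closed case we are done: every non-trivial free homotopy class on a closed hyperbolic surface admits a unique closed geodesic representative, of length strictly less than that of any representative carrying a corner; applied to $L_1$ this produces a closed geodesic shorter than $\ell$, contradicting $\gamma$ being a systole.

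The once-punctured case requires the additional input that at least one of $L_1,L_2$ is non-peripheral, since a peripheral free-homotopy class has no closed geodesic representative; this is the main obstacle. My plan is to rule out both $L_i$ being peripheral by a topological argument. Each $L_i$ is simple under $\alpha\cap\gamma=\{p,q\}$, so peripherality would mean one side of $L_i$ is a once-punctured topological disk $D_i$ containing the unique cusp of $S$. Enumerating the connected components of $S\setminus(\alpha\cup\gamma)$ (the cases being $\gamma$ separating versus non-separating, combined with which component contains the cusp) and using Euler-characteristic additivity across the arcs shared between adjacent regions, one finds that the only way both $D_1$ and $D_2$ can be once-punctured disks is if one side of $\gamma$ in $S$ is itself a disk or a once-punctured disk. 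The first forces $\gamma$ to be null-homotopic, the second forces it to be peripheral; both contradict $\gamma$ being a non-peripheral simple closed geodesic. Hence at least one $L_i$ is non-peripheral and we conclude as in the closed case.
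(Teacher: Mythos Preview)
Your argument is correct and follows essentially the same route as the paper's proof: form the two loops $L_1,L_2$ from $\alpha$ and the two arcs of $\gamma$, observe that both have length at most $\ell$ and carry genuine corners (hence their geodesic representatives are strictly shorter), and then argue that at least one of them is essential. The paper's proof is considerably terser: it simply asserts that ``if the surface has at most one cusp, one of them must be non-trivial'' without justification. Your topological case analysis (enumerating the components of $S\setminus(\alpha\cup\gamma)$ and tracking where the unique cusp can sit) is exactly what is needed to fill in this step, and the reduction to $\alpha\cap\gamma=\{p,q\}$ is legitimate since sub-arcs of distance-realising geodesics are again distance-realising (so the inequality $\length(\alpha)\le\ell_1$ survives the reduction). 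One small point: your stated conclusion ``one side of $\gamma$ is a disk or a once-punctured disk'' is not literally what emerges in every sub-case (for instance, when $\gamma$ is non-separating you simply reach a direct contradiction, e.g.\ via homology, rather than producing such a side); but the overall inference---that both $L_i$ being peripheral forces $\gamma$ to be inessential---is sound.
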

\begin{proof}[Proof of lemma \ref{lem:straight}]
Suppose that a systole $\sigma$ is not straight, thus there is a path $c$ between two of its points $p$ and $q$ of length less than the length of the shortest of the two paths $c_1$ and $c_2$ of $\sigma$ between the same points. 

\begin{figure}[h]
%\ShowGrid
\leavevmode \SetLabels
\L(.34*.58) $c$\\
\L(.3*.27) $c_1$\\
\L(.4*.27) $c_2$\\
\L(.67*.6) $c$\\
\L(.60*.35) $c_1$\\
\L(.53*.34) $c_2$\\
\endSetLabels
\begin{center}
\AffixLabels{\centerline{\epsfig{file =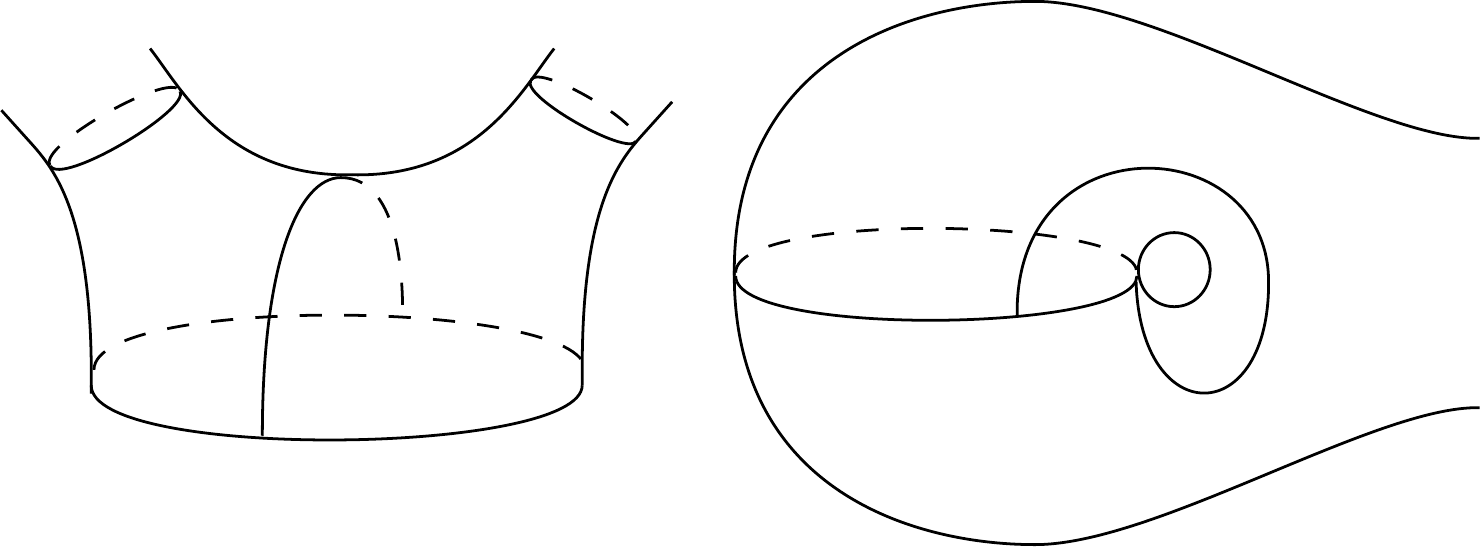,width=8cm,angle=0}}}
\end{center}
\caption{The two cases for $c,c_1,c_2$} \label{fig1}
\end{figure}

Consider the two homotopy classes $c\cup c_1$ and $c\cup c_2$. They are either non-trivial or are parallel to a cusp. If the surface has at most one cusp, one of them, say $c\cup c_1$ must be non-trivial. The geodesic $\tilde{\sigma}$ in the homotopy class satisfies $\ell(\tilde{\sigma}) < \ell(c)+\ell(c_1) \leq \ell(\sigma)$, a contradiction.
\end{proof}

\begin{remark} The lemma above cannot be generalized to surfaces with multiple punctures. To see this, consider the following twice-punctured surface which is constructed as follows. 

We begin with any finite trivalent graph $G$ of girth at least $4$ (the girth is the shortest non-trivial cycle). (An example of such a graph is the complete bipartite $K_{3,3}$ graph: in fact, it is the smallest trivalent graph with girth $4$ and it has exactly $6$ vertices.) A new graph $\tilde{G}$ is obtained by removing a single edge and replacing it with a tripod (see figure \ref{fig2}). This operation does not decrease the girth of the graph. 

\begin{figure}[h]

\begin{center}
\AffixLabels{\centerline{\epsfig{file =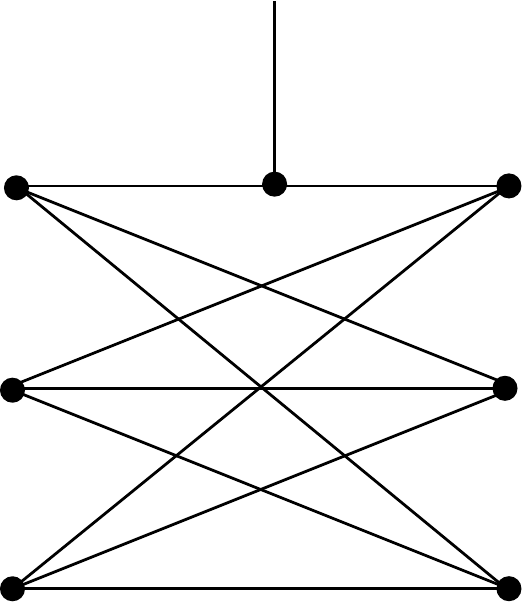,width=4cm,angle=0}}}
\end{center}
\caption{The modified $K_{33}$ graph} \label{fig2}
\end{figure}

We view the resulting graph as the graph of a pants decomposition, and construct a surface by inserting (hyperbolic) pants with boundary lengths all equal to $4\arcsinh 1$ in the usual way, without paying any attention to twist parameters. The resulting surface $B$ has a single boundary geodesic of length $4\arcsinh 1$. To obtain a surface with $2$ cusps, we glue a pair of pants with $2$ cusps and a boundary geodesic of length $4\arcsinh 1$ along the boundary geodesic of $B$. As an illustration, if we choose the $K_{3,3}$ graph to begin with, the resulting surface has a total of $8$ pairs of pants and is of genus $4$. 

Our claim is that the systoles of this surface are exactly the $N$ ($=11$ in the $K_{3,3}$ case) geodesics of length $4\arcsinh 1$ which were the boundary geodesics of the pants. We'll denote these curves $\sigma_1,\hdots,\sigma_{N}$ with $\sigma_1$ being the geodesic that forms a pair of pants with the two cusps.

\begin{figure}[h]
%\ShowGrid
\leavevmode \SetLabels
\L(.455*.88) $h$\\
\L(.4*.52) $d$\\
\L(.5*.24) $d$\\
\L(.59*.52) $d$\\
\endSetLabels
\begin{center}
\AffixLabels{\centerline{\epsfig{file =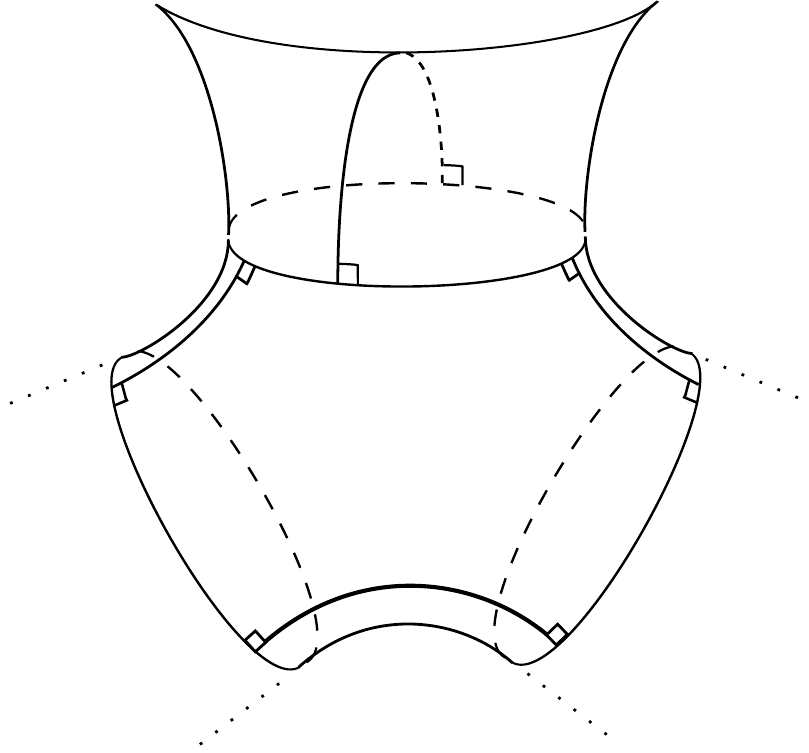,width=7cm,angle=0}}}
\end{center}
\caption{Adding a cusped pair of pants} \label{fig3}
\end{figure}

To see that these curves are indeed systoles, we will show that any geodesic that intersects one of these curves is strictly longer. We begin by taking any curve $\gamma$ that crosses one of the curves $\sigma_k$ and has an arc $c$ that leaves and comes back on the same side of $\sigma_k$. (In the event where this curve is $\sigma_1$ we consider the arc of $\gamma$ not contained in the pair of pants with $2$ cusps.) We replace $c$ with the unique shortest geodesic $\tilde{c}$ in the free homotopy class where the endpoints are allowed to slide on $\sigma_k$. Now $\tilde{c}$ separates $\sigma_k$ into two arcs, one of which, say $\hat{c}$, has length less than $2 \arcsinh 1$. Via standard hyperbolic trigonometric arguments, the unique geodesic in the homotopy class of $\tilde{c}\cup \hat{c}$ has length less than $\ell(\tilde{c})$, thus less than $\ell(c)$, and less than $\ell(\gamma)$. It follows that we can restrict our attention to curves that don't have this ``backtracking" property.

Such curves describe non-trivial cycles in the underlying graph, and have cycle length at least $4$. It follows that they pass through at least $4$ pairs of pants and as a consequence, their length is at least $4$ times the shortest distance between boundary curves of the pants. Again, via standard hyperbolic trigonometry, this distance is $d= 2 \arcsinh(\frac{1}{2})$. These curves are thus at least of length $8 \arcsinh(\frac{1}{2}) > 4 \arcsinh 1$.

We conclude by observing that although the curve $\sigma_1$ is a systole, it is not straight. Indeed, on the pair of pants with $\sigma_1$ and the two cusps, there is a unique simple geodesic path $h$, perpendicular to $\sigma_1$ in both endpoints. Again, via hyperbolic trigonometry its length is exactly $2 \arcsinh 1$. It is thus a distance realizing path between the endpoints and is not contained in $\sigma_1$.
\end{remark}

We now use lemma \ref{lem:straight} to show the following.

\begin{lemma}\label{lem:int} Let $\gamma$ be the homological systole of a surface $S$ with at most one puncture. Then if $\delta$ is the systole of $S$, then $\gamma$ and $\delta$ intersect at most once.
\end{lemma}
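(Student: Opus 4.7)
The plan is to argue by contradiction using the straightness of both curves. The curve $\delta$ is straight by Lemma \ref{lem:straight}, since we are in the closed or once-punctured setting, and $\gamma$ is straight because homological systoles are always straight, as recalled above from \cite{gr83}. If $\gamma=\delta$ there is nothing to check, so I assume $\gamma\neq\delta$ and, for contradiction, that they meet in at least two distinct points $p$ and $q$.

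Label the two subarcs of $\gamma$ joining $p$ and $q$ as $\gamma_1,\gamma_2$ with $\ell(\gamma_1)\leq\ell(\gamma_2)$, and similarly $\delta_1,\delta_2$ with $\ell(\delta_1)\leq\ell(\delta_2)$. The distance $d(p,q)$ is realized by some path in $S$, and by straightness of $\gamma$ this path must be a subarc of $\gamma$; the only candidates from $p$ to $q$ are $\gamma_1$ and $\gamma_2$, so $d(p,q)=\ell(\gamma_1)$. The identical argument applied to $\delta$ yields $d(p,q)=\ell(\delta_1)$. In particular $\gamma_1$ and $\delta_1$ have the same length and both realize $d(p,q)$.

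Now $\delta_1$ is a distance-realizing path between the two points $p,q\in\gamma$, so applying straightness of $\gamma$ a second time, $\delta_1$ must itself be a subarc of $\gamma$. But $\delta_1$ is also a nondegenerate subarc of the geodesic $\delta$, so $\gamma$ and $\delta$ share a common geodesic arc. Since a geodesic on a hyperbolic surface is uniquely determined by any one of its points together with a tangent direction, this forces $\gamma=\delta$, contradicting our standing assumption. The step that requires the most care is this double invocation of straightness: first on $\gamma$ to identify $\ell(\gamma_1)=\ell(\delta_1)=d(p,q)$, and then a second time on $\gamma$ to upgrade $\delta_1$ from being merely a distance-realizing path to an actual subarc of $\gamma$. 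Once this is set up, the collapse $\gamma=\delta$ is immediate, and the rest is routine bookkeeping.
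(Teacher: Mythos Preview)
Your proof is correct, and it takes a genuinely different route from the paper's. The paper argues by surgery: assuming at least two intersections, it cuts $\delta$ along $\gamma$, picks a shortest arc $c$ of $\delta$ (of length at most $\tfrac{1}{2}\ell(\delta)\leq\tfrac{1}{2}\ell(\gamma)$), and then in two cases (according to whether the endpoints of $c$ lie on the same or on different copies of $\gamma$) concatenates $c$ with subarcs of $\gamma$ to produce a strictly shorter homologically non-trivial curve, contradicting the minimality of $\gamma$. Straightness is not used in that argument at all.

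Your approach instead exploits straightness of \emph{both} curves and really establishes the cleaner general fact that any two distinct straight closed geodesics on a hyperbolic surface meet at most once. This is shorter and more conceptual, but it leans on the result quoted from \cite{gr83} that homological systoles are always straight, whereas the paper's cut-and-paste argument is self-contained in that respect. Conversely, the paper's surgery argument uses the specific homological structure (single intersection implies non-separating; the pair-of-pants relation in homology), which your proof bypasses entirely.
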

\begin{proof}[Proof of lemma \ref{lem:int}]
We proceed by contradiction. Suppose $\gamma$ and $\delta$ as above intersect more than once. If we cut the surface along $\gamma$ then $\delta$ is cut into at least $2$ arcs. One of these arcs, say $c$, has the property of being of length less than $\frac{1}{2} \ell(\delta)\leq \frac{1}{2} \ell(\gamma).$ This arc $c$ can be of two types: either $c$ has its endpoints on the two copies of $\gamma$, or it has both endpoints on the same copy. 

In the first case, we consider the shortest arc $\tilde{c}$ of $\gamma$ between the two endpoints of $c$. Clearly $\ell(\tilde{c}) < \ell(\gamma)$. Note that $c \cup \tilde{c}$ describe a simple closed path, and consider the geodesic $\tilde{\gamma}$ in the homotopy class of $c\cup \tilde{c}$. By construction we have $\ii(\gamma,\tilde{\gamma})=1$ and $\ell(\gamma)< \frac{1}{2} (\ell(\gamma) + \ell(\delta))\leq \ell(\gamma)$. Observe that a curve that essentially intersects another curve exactly once is not only homotopically non-trivial, but also homologically non-trivial (a separating curve essentially intersects any other curve at least twice). 

In the second case, consider as before the arc $c$, the shortest of the arcs of $\delta$ obtained by cutting along $\gamma$. Consider $c_1$ and $c_2$ the two arcs of $\gamma$ between the endpoints of $c$. Consider the two geodesics $\gamma_1$ and $\gamma_2$ in the homotopy class of respectively $c \cup c_1$ and $c \cup c_2$. Observe that 
$$
\ell(\gamma_1),\ell(\gamma_2) < \ell(\gamma).$$
In homology, one can orient the curves so that $\gamma$ can be written as the sum of $\gamma_1$ and $\gamma_2$. This is because the three are the boundary curves of a pair of pants. The boundary curves of a pair of pants, for a certain given orientation, form a multicurve which is trivial in homology. It follows that if $\gamma$ is non-trivial in homology, then $\gamma_1$ and $\gamma_2$ cannot both be trivial. As both are of length strictly less than $\gamma$, we obtain a contradiction.
\end{proof}
 
We have the obvious inequalities $\sys_{g,n} \leq \sys^h_{g,n}$. We will now proceed to show that $\sys_g \geq \sys^h_g$, resp. $\sys_{g,1} \geq \sys^h_{g,1}$. Consider a maximal surface $S_{\max}$ for the homology systole in $\M_{g,n}$ for $n=0$ or $n=1$.\\

\noindent{\bf Claim.} {\it All systoles of $S_{\max}$ are homologically non-trivial.}

\begin{remark}
Note that the claim implies the desired inequalities for $n=0,1$:
$$\sys^h_{g,n} = \sys^h(S_{\max})=\sys(S_{\max})\leq \sys_{g,n}$$
and this concludes the proof of Theorem \ref{thm:main}.
\end{remark}
\begin{proof}[Proof of claim:]
We shall proceed by contradiction. Suppose $S_{\max}$ has a homologically trivial systole, i.e., a separating systole $\delta$. As $S_{\max}$ is maximal, the curve $\delta$ must cross a homological systole $\gamma$, otherwise by the length expansion lemma, one can increase $\delta$ to strictly increase the length of all simple closed geodesics that do not cross $\delta$, thus including all homological systoles. Now $\delta$ is separating and essentially intersects $\gamma$, thus $\delta$ must intersect $\gamma$ at least twice which by lemma \ref{lem:int} is a contradiction.
\end{proof}

\section{Surfaces with punctures}\label{sec:mul}

One could ask what happens for multiply punctured surfaces. What fails in the proof is that for $n\geq 2$, the systole of a surface is no longer necessarily straight. Note that the equality may in fact hold for $n=2$, only the method given here doesn't work. A general inequality of the form $\sys^h_{g,n}= \sys_{g,n}$ is deemed to fail however, as will be explained in what follows. 

The basic reason for this is that $\sys^h_{g,n}$ is a strictly growing function of $n$ and in contrast $\sys_{g,n}$ is uniformly bounded (by a function of $g$). One way of making this effective is by examining Buser's hairy torus examples (see \cite{bubook}). We recall briefly the construction and features of these surfaces.

One begins by constructing a hyperbolic ``square" (a right angled quadrilateral with equal length sides) with a cusp in the middle. Each side of the square can be taken to be of length $2 \arcsinh 1$. One then constructs an $m\times m$ checker board using $m^2$ of these squares. The board now has its $4$ sides of length $2 m \arcsinh 1$. A torus is then obtained by gluing the opposite sides in the obvious way. Now if one cuts along a non-homologically trivial curve on this surface, the genus must be reduced. Such a curve must cross either every horizontal line or every vertical line. As such, it must have length at least $2 m \arcsinh 1$.

\begin{figure}[h]
%\ShowGrid
\begin{center}
\AffixLabels{\centerline{\epsfig{file =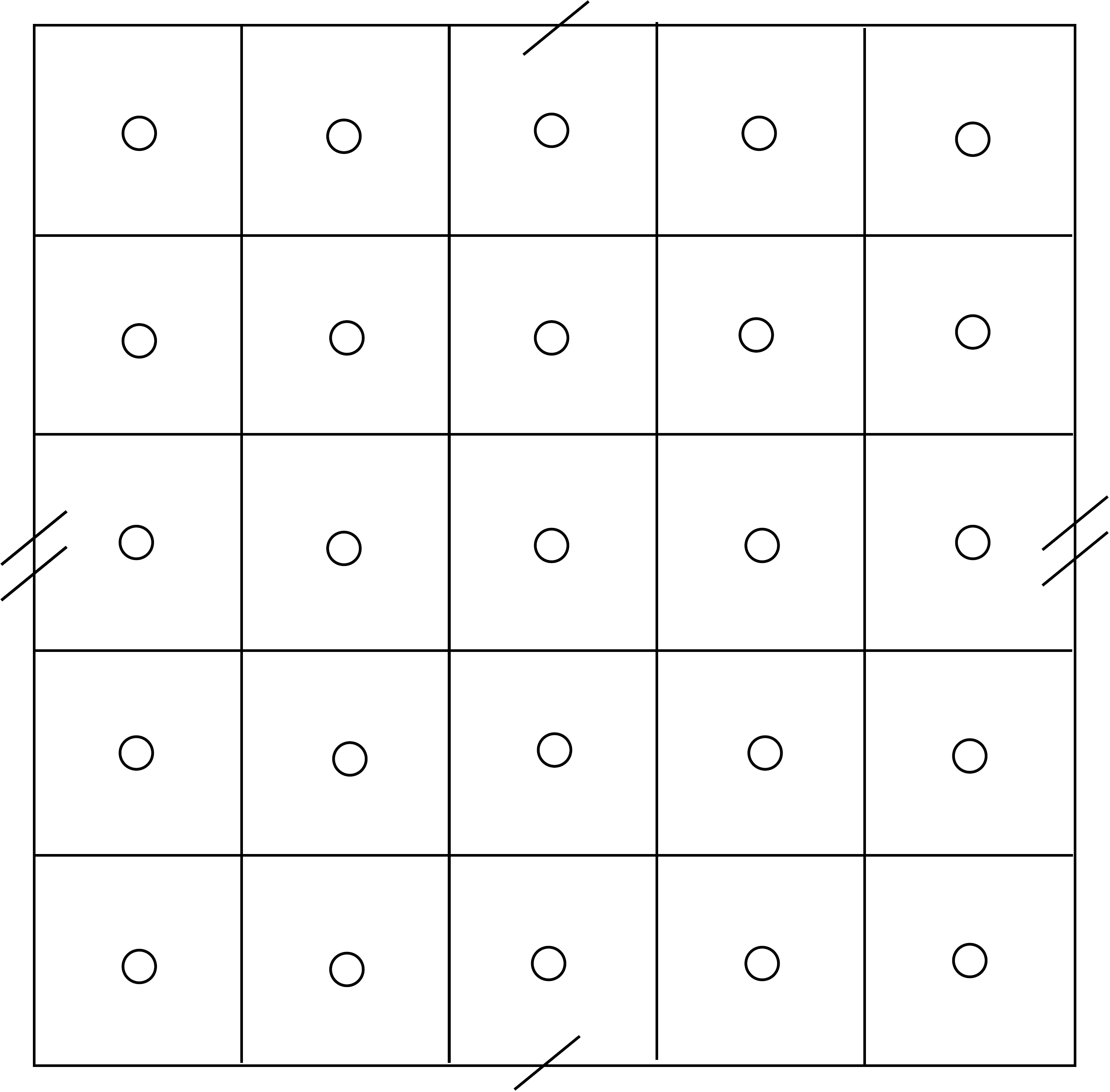,width=6cm,angle=0}}}
\end{center}
\caption{The schematics of a hairy torus - the circles represent cusps} \label{fig4}
\end{figure}

To obtain surfaces of genus $g$ surfaces which enjoy the same property, one can paste together $g$ copies of a hairy torus. To paste two tori together, one can replace a cusp on each torus by a very short geodesic in the standard way. The very short geodesic is homologically trivial, and again by the collar lemma, any curve that crosses it must be very long. In particular, a homology systole will not cross this curve and will thus remain in one of the two tori. One now repeats the construction to obtain a string of hairy tori (see figure \ref{fig5}). The resulting surface is of signature $(g, g m^2)$ with homology systole at least $2 m \arcsinh 1$. 

\begin{figure}[h]
\begin{center}
\AffixLabels{\centerline{\epsfig{file =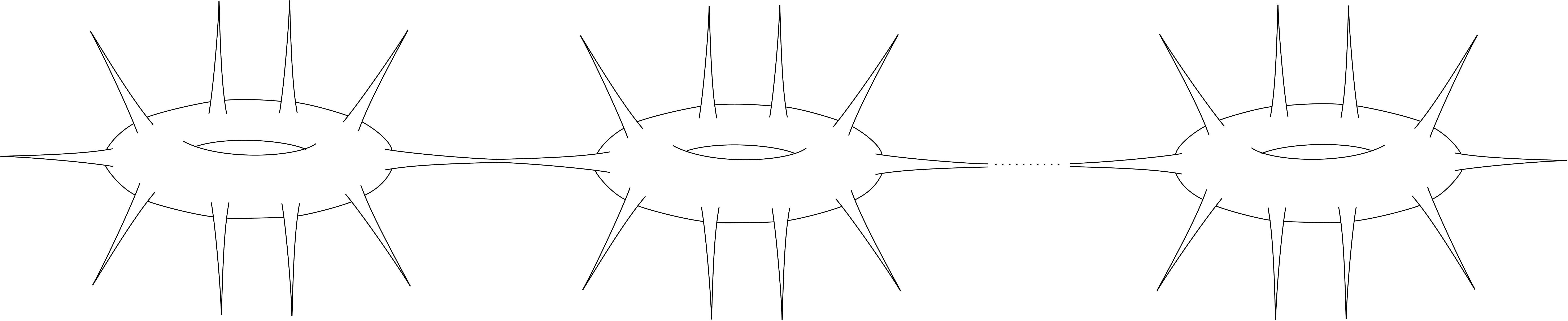,height=3cm,angle=0}}}
\end{center}
\caption{The hairy genus $g$ surface} \label{fig5}
\end{figure}

In contrast, note that for fixed genus, the systole length of a surface is uniformly bounded. More specifically, Schmutz Schaller \cite{sc941} proved that the systole of a surface of signature $(g,n)$ is bounded by $4 \arccosh ((6g - 6+3n)/n)$ (which is uniformly bounded by a constant which depends on $g$). For each $g$, we can now compute the minimal $m$ for which $2 m \arcsinh 1 > 4 \arccosh ((6g - 6+3n)/n)$ with $n= g m^2$. The first $m$ for which this occurs is $m=5$. As a result of this construction, and the monoticity of $\sys^h_{g,n}$ in $n$, we obtain the following.
\begin{proposition}
For all $n\geq 25 g$,\hspace{0.1cm}
$
 \sys^h_{g,n}  > \sys_{g,n}.
 $
\end{proposition}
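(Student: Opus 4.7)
My plan is to make the hairy-torus construction from the preceding paragraphs effective and then compare it against Schmutz Schaller's universal upper bound on $\sys_{g,n}$. The strategy has three parts: construct an explicit lower bound for $\sys^h_{g,25g}$, isolate an explicit upper bound on $\sys_{g,n}$ valid for all $n\geq 25g$, and verify the gap between them via a clean hyperbolic-trigonometric identity.

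First, fix $m=5$ and build the hairy genus $g$ surface $S$ of signature $(g,25g)$ by gluing $g$ copies of the $5\times 5$ cusped checkerboard torus along short separating geodesics, exactly as recalled above the proposition. By the checkerboard argument, any homologically non-trivial curve in a single hairy torus must cross either every horizontal or every vertical line of the grid, so it has length at least $2m\arcsinh 1 = 10\arcsinh 1$. The collar lemma applied to the connecting geodesics (which are taken arbitrarily short) forces any homological systole of $S$ to stay inside a single hairy torus, where it is non-trivial in the homology of that torus. Hence $\sys^h(S)\geq 10\arcsinh 1$, and in particular $\sys^h_{g,25g}\geq 10\arcsinh 1$.

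Next I bound $\sys_{g,n}$ from above using Schmutz Schaller's inequality
\[
\sys_{g,n}\leq 4\arccosh\!\left(\frac{6g-6+3n}{n}\right)=4\arccosh\!\left(3+\frac{6g-6}{n}\right).
\]
For $g\geq 1$ and $n\geq 25g$ the right-hand side is monotone decreasing in $n$, so it is at most $4\arccosh(3+\tfrac{6}{25})=4\arccosh(81/25)$. The essential comparison rests on the identity $\arccosh 3 = 2\arcsinh 1$, which is immediate from $\cosh(2\arcsinh 1)=1+2\sinh^2(\arcsinh 1)=3$. This yields $4\arccosh 3 = 8\arcsinh 1 < 10\arcsinh 1$, leaving a cushion of $2\arcsinh 1$. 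To see that $81/25$ lies inside this cushion, I verify the equivalent inequality $\cosh(\tfrac{5}{2}\arcsinh 1)>81/25$; writing $e^{\arcsinh 1}=1+\sqrt 2$ and expanding, this becomes a clean algebraic lower bound on $(1+\sqrt 2)^{5/2}+(\sqrt 2-1)^{5/2}$ which is satisfied with considerable room to spare.

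Finally, invoking the monotonicity of $\sys^h_{g,n}$ in $n$ asserted in the discussion above, one obtains $\sys^h_{g,n}\geq\sys^h_{g,25g}\geq 10\arcsinh 1 > 4\arccosh(81/25)\geq \sys_{g,n}$ for every $n\geq 25g$, which is the proposition. The only mildly delicate point I expect in a polished write-up is presenting the inequality $\cosh(\tfrac{5}{2}\arcsinh 1)>81/25$ without appeal to decimal evaluation; the cleanest route is the algebraic lower bound on $(1+\sqrt 2)^{5/2}$ via the binomial or a crude $(1+\sqrt 2)^2\cdot\sqrt{1+\sqrt 2}$ estimate.
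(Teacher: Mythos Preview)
Your argument is correct and follows exactly the paper's approach: the same hairy-torus construction with $m=5$, the same Schmutz Schaller upper bound, and the same appeal to monotonicity of $\sys^h_{g,n}$ in $n$. You simply make explicit the numerical comparison $10\arcsinh 1 > 4\arccosh(81/25)$ that the paper asserts without detail; note one harmless slip in your write-up: the $\cosh$ expansion carries a factor $\tfrac12$, so the algebraic target is $(1+\sqrt2)^{5/2}+(\sqrt2-1)^{5/2}>162/25$, which is still satisfied with ample margin.
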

The bound in the above proposition is certainly far from being sharp. It might be interesting to find for any given genus $g$ what the first $n_g$ is for which we have $\sys^h_{g,n_g}  > \sys_{g,n_g}$.

\bibliographystyle{amsalpha}

\begin{thebibliography}{Mum71}

\bibitem[Ada98]{ad98}
Colin Adams, \emph{Maximal cusps, collars, and systoles in hyperbolic
  surfaces}, Indiana Univ. Math. J. \textbf{47} (1998), no.~2, 419--437.
  \MR{MR1647904 (99j:57011)}

\bibitem[Akr03]{ak03}
Hugo Akrout, \emph{Singularit\'es topologiques des systoles
  g\'en\'eralis\'ees}, Topology \textbf{42} (2003), no.~2, 291--308.
  \MR{MR1941437 (2004k:32017)}

\bibitem[Bro99]{br99}
Robert Brooks, \emph{Platonic surfaces}, Comment. Math. Helv. \textbf{74}
  (1999), no.~1, 156--170. \MR{MR1677565 (99k:58185)}

\bibitem[BS94]{busa94}
Peter Buser and Peter Sarnak, \emph{On the period matrix of a {R}iemann surface
  of large genus}, Invent. Math. \textbf{117} (1994), no.~1, 27--56, With an
  appendix by J. H. Conway and N. J. A. Sloane. \MR{MR1269424 (95i:22018)}

\bibitem[Bus81]{buhab}
Peter Buser, \emph{Riemannshe fl\"achen und l\"angenspektrum vom
  trigonometrishen standpunkt}, Habilitation Thesis, University of Bonn (1981).

\bibitem[Bus92]{bubook}
\bysame, \emph{Geometry and spectra of compact {R}iemann surfaces}, Progress in
  Mathematics, vol. 106, Birkh\"auser Boston Inc., Boston, MA, 1992.
  \MR{93g:58149}

\bibitem[Gro83]{gr83}
Mikhael Gromov, \emph{Filling {R}iemannian manifolds}, J. Differential Geom.
  \textbf{18} (1983), no.~1, 1--147. \MR{MR697984 (85h:53029)}

\bibitem[Gro96]{gr96}
\bysame, \emph{Systoles and intersystolic inequalities}, Actes de la {T}able
  {R}onde de {G}\'eom\'etrie {D}iff\'erentielle ({L}uminy, 1992), S\'emin.
  Congr., vol.~1, Soc. Math. France, Paris, 1996, pp.~291--362. \MR{1427763
  (99a:53051)}

\bibitem[Jen84]{je84}
Felix Jenni, \emph{\"{U}ber den ersten {E}igenwert des {L}aplace-{O}perators
  auf ausgew\"ahlten {B}eispielen kompakter {R}iemannscher {F}l\"achen},
  Comment. Math. Helv. \textbf{59} (1984), no.~2, 193--203. \MR{85i:58118}

\bibitem[Kee74]{kee74}
Linda Keen, \emph{Collars on {R}iemann surfaces}, Discontinuous groups and
  Riemann surfaces (Proc. Conf., Univ. Maryland, College Park, Md., 1973),
  Princeton Univ. Press, Princeton, N.J., 1974, pp.~263--268. Ann. of Math.
  Studies, No. 79. \MR{52 \#738}

\bibitem[KSV07]{kascvi07}
Mikhail~G. Katz, Mary Schaps, and Uzi Vishne, \emph{Logarithmic growth of
  systole of arithmetic {R}iemann surfaces along congruence subgroups}, J.
  Differential Geom. \textbf{76} (2007), no.~3, 399--422. \MR{MR2331526}

\bibitem[Mum71]{mu71}
David Mumford, \emph{A remark on {M}ahler's compactness theorem}, Proc. Amer.
  Math. Soc. \textbf{28} (1971), 289--294. \MR{MR0276410 (43 \#2157)}

\bibitem[Par05]{pa051}
Hugo Parlier, \emph{Lengths of geodesics on {R}iemann surfaces with boundary},
  Ann. Acad. Sci. Fenn. Math. \textbf{30} (2005), no.~2, 227--236.
  \MR{MR2173363 (2006f:30050)}

\bibitem[PT10]{path101}
Athanase Papadopoulos and Guillaume Th{\'e}ret, \emph{Shortening all the simple
  closed geodesics on surfaces with boundary}, to appear in Proc. of the AMS
  \textbf{113} (2010).

\bibitem[Sch93]{sc931}
P.~Schmutz, \emph{Riemann surfaces with shortest geodesic of maximal length},
  Geom. Funct. Anal. \textbf{3} (1993), no.~6, 564--631. \MR{95f:30060}

\bibitem[Sch94]{sc941}
Paul Schmutz, \emph{Congruence subgroups and maximal {R}iemann surfaces}, J.
  Geom. Anal. \textbf{4} (1994), no.~2, 207--218. \MR{95j:30039}

\bibitem[SS99]{sc99}
Paul Schmutz~Schaller, \emph{Systoles and topological {M}orse functions for
  {R}iemann surfaces}, J. Differential Geom. \textbf{52} (1999), no.~3,
  407--452. \MR{MR1761080 (2001d:32018)}

\bibitem[Thu]{thspine}
William Thurston, \emph{A spine for {T}eichm\"uller space}, preprint.

\end{thebibliography}
\def\cprime{$'$}
\providecommand{\bysame}{\leavevmode\hbox to3em{\hrulefill}\thinspace}
\providecommand{\MR}{\relax\ifhmode\unskip\space\fi MR }
% \MRhref is called by the amsart/book/proc definition of \MR.
\providecommand{\MRhref}[2]{%
  \href{http://www.ams.org/mathscinet-getitem?mr=#1}{#2}
}
\providecommand{\href}[2]{#2}

\end{document}